
\documentclass[11pt,reqno]{amsproc}
\title[]{Stability of Weak Electrokinetic Flow}

\author{Fizay-Noah Lee}
\address{Department of Mathematics, Vanderbilt University, Nashville, TN 37235}
\email{noah.lee@vanderbilt.edu}
\usepackage[margin=1in]{geometry}
\usepackage{amsmath, amsthm, amssymb}
\usepackage{times}
\usepackage{color}
\usepackage{hyperref}
\usepackage{comment}
\usepackage{enumerate}
\usepackage{setspace}
\usepackage{xcolor}

\newcommand{\pa}{\partial}

\newcommand{\la}{\label}
\newcommand{\fr}{\frac}
\newcommand{\na}{\nabla}
\newcommand{\be}{\begin{equation}}
\newcommand{\ee}{\end{equation}}
\newcommand{\bal}{\begin{aligned}}
\newcommand{\eal}{\end{aligned}}
\newcommand{\ba}{\begin{array}{l}}
\newcommand{\ea}{\end{array}}
\newcommand{\Rr}{{\mathbb R}}

\newtheorem{thm}{Theorem}
\newtheorem{prop}{Proposition}

\newtheorem{rem}{Remark}

\renewcommand{\div}{{\mbox{div}\,}}
\newcommand{\D}{\Delta}

\date{today}
\keywords{electroconvection, electrokinetic instability, Nernst-Planck}

\begin{document}

\noindent\thanks{\em{MSC Classification:  35Q30, 35Q35, 35Q92.}}

\begin{abstract}
    We consider the Nernst-Planck-Stokes system on a bounded domain of $\Rr^d$, $d=2,3$ with general nonequilibrium Dirichlet boundary conditions for the ionic concentrations. It is well known that, in a wide range of cases, equilibrium steady state solutions of the system, characterized by zero fluid flow, are asymptotically stable. In these regimes, the existence of a natural dissipative structure is critical in obtaining stability. This structure, in general, breaks down under nonequilibrium conditions, in which case, in the steady state, the fluid flow may be nontrivial. In this short paper, we show that, nonetheless, certain classes of very weak nonequilibrium steady states, with nonzero fluid flow, remain globally asymptotically stable.
\end{abstract}

\maketitle

\section{Introduction}
We consider the Nernst-Planck-Stokes (NPS) system in a connected, but not necessarily simply connected bounded domain $\Omega\subset\mathbb{R}^d$ ($d=2,3$) with smooth boundary. The system models electrodiffusion of ions in a fluid in the presence of an applied electrical potential on the boundary \cite{prob,rubibook}. In this paper, we consider the case where there are two, oppositely charged ionic species with, for simplicity, charges $+1$ and $-1$. Mathematically, the system is given by the Nernst-Planck equations
\be
\bal
\pa_t c_1+u\cdot\na c_1=&D_1\div(\na c_1+c_1\na\Phi)\\
\pa_t c_2+u\cdot\na c_2=&D_2\div(\na c_2-c_2\na\Phi)\la{np}
\eal
\ee
coupled to the Poisson equation
\be
-\epsilon\D\Phi=c_1-c_2=\rho\la{pois}
\ee
and to the Stokes system
\be
\pa_t u-\nu\D u+\na p=-K\rho\na\Phi,\quad \div u=0.\la{nse}
\ee
Above, $c_i$ are the local ionic concentrations respectively, $\rho$ is a rescaled local charge density, $u$ is the fluid velocity, and $\Phi$ is a rescaled electrical potential. The constant $K>0$ is a coupling constant given by the product of Boltzmann's constant $k_B$ and the absolute temperature $T_K$. The constants $D_i$ are the ionic diffusivities, $\epsilon>0$ is a rescaled dielectric permittivity of the solvent proportional to the square of the Debye length, and $\nu>0$ is the kinematic viscosity of the fluid. The dimensional counterparts of $\Phi$ and $\rho$ are given by $(k_BT_k/e)\Phi$ and $e\rho$, respectively, where $e$ is elementary charge.

It is well known that for certain \textit{equilibrium boundary conditions,} (see \eqref{eqc1} and  \eqref{eqc2} below) the NPS system (\ref{np})-(\ref{nse}) admits a unique steady state solution, with vanishing fluid velocity $u^* \equiv 0$, and with ionic concentrations $c_i^*$ explicitly related to $\Phi^*$, which uniquely solves a nonlinear Poisson equation, known as the Poisson-Boltzmann equation, 
\be
\bal
-\epsilon\D\Phi^* &= c_1^*-c_2^*\\
c_1^* &= \fr{e^{-\Phi^*}}{Z_1}\\
c_2^* &= \fr{e^{\Phi^*}}{Z_2}
\eal
\la{PB}
\ee
with normalizing constants $Z_1,Z_2>0$.

For equilibrium boundary conditions it is known that for $d=2$ the unique steady states are globally stable \cite{bothe,ci} and for $d=3$ locally stable \cite{np3d,ryham}. Equilibrium boundary conditions include the cases where $c_i$ obey blocking boundary conditions
\be\la{eqc1}
(\pa_nc_1+c_1\pa_n\Phi)_{|\pa\Omega}=(\pa_nc_2-c_2\pa_n\Phi)_{|\pa\Omega}=0
\ee
(here, $\pa_n$ is the normal derivative).

Another example of equilibrium boundary conditions is where $c_i$ obey a mix of blocking and Dirichlet boundary conditions and $\Phi$ obeys Dirichlet boundary conditions in such a way that the electrochemical potentials
\be
\mu_1=\log c_1+\Phi,\quad \mu_2=\log c_2-\Phi
\ee
are each constant on the boundary portions where $c_i$ obey Dirichlet boundary conditions. That is, if $c_i$ satisfy Dirichlet boundary conditions on $S_i\subset\pa\Omega$, then one must have
\be
{\mu_i}_{|S_i}=\text{constant}.\la{eqc2}
\ee
We point out that the latter condition is a very strong requirement, as one cannot in general pick $\Phi_{|\pa\Omega}$ that will satisfy \eqref{eqc2} for both $i=1$ and $i=2$ simultaneously, if ${c_1}_{|S_1}$ and ${c_2}_{|S_2}$ are arbitrarily prescribed. Such a choice would be possible if, for example, ${c_1}_{|S_1}$ and ${c_2}_{|S_2}$ were each chosen to be constant.

 In general, deviations from equilibrium boundary conditions can produce instabilities and even chaotic behavior for time dependent solutions of NPS \cite{davidson,kang,pham,rubinstein,rubizaltz,zaltzrubi}. Furthermore, the existence of steady states in nonequilibrium configurations is not guaranteed in full generality, and even in cases when existence is known, their uniqueness is, in general, not known to hold \cite{mock,park}.

In both the mathematical and physical literature, a wide range of boundary conditions are considered for both the ionic concentrations and the electrical potential, a few of which are mentioned above. We refer the reader to \cite{bothe,ci,cil,davidson,mock,schmuck} for further discussions on boundary conditions. In this paper, we consider Dirichlet boundary conditions for both $c_i$ and $\Phi$, together with no-slip boundary conditions for $u$,
\be
\bal
{c_i}_{|\pa\Omega}&=\gamma_i>0,\quad i=1,2\\
\Phi_{|\pa\Omega}&=W\\
u_{|\pa\Omega}&=0.\la{BC}
\eal
\ee

We assume $\gamma_i,W\in C^\infty(\pa\Omega)$, but we do not require them to be constant. For $c_i$, the Dirichlet boundary conditions model, for example, ion-selectivity at an ion-selective membrane or some fixed concentration of ions at the boundary layer-bulk interface. Dirichlet boundary conditions for $\Phi$ model an applied electric potential on the boundary. 

There is a large literature on the well-posedness of the time dependent NPS (and the related Nernst-Planck-Navier-Stokes) system \cite{bothe,ci,np3d,cil,fischer,fnl,liu,ryham,schmuck}, as well as the uncoupled Nernst-Planck \cite{biler,biler2,choi,gaj,gajewski,mock} and Navier-Stokes systems \cite{cf,temam}. 

For questions of long time behavior and stability of NPS solutions, most studies have focused on the case of equilibrium boundary conditions. As briefly mentioned above, the corresponding steady states in these cases are the unique Nernst-Planck steady states $c_i^*$, together with zero fluid flow $u^*\equiv 0$ c.f. \cite{bothe, ci, np3d, NPS}. Throughout this paper, we shall refer to such steady states as \textit{equilibrium steady states}.

Part of the reason for the plentiful results in the equilibrium regimes is the fact that for these cases, there generally exists a natural dissipative structure (manifested by a dissipative energy inequality), from which stability can be deduced. On the other hand in both experiments \cite{kang,rubinstein} and numerical simulations \cite{davidson, pham}, so called electrokinetic instabilities (EKI) are known to occur in electrochemical systems whereby electrically induced flow patterns emerge. These patterns range from vortical to chaotic, and the mechanism behind the emergence of these patterns have long been a subject of study \cite{rubizaltz}. 

In light of these observations, it is of interest to analytically study the dynamical properties of solutions of the NPS system in regimes where nontrivial flow patterns may emerge. In \cite{ltd} we establish the existence and finite dimensionality of a global attractor for the NPS system. The results hold for boundary conditions that, in particular, include cases where the relevant steady state solution(s) has nonzero fluid flow (c.f. Theorem \ref{boundarythm} below). The existence of a global attractor establishes stability in a rather generalized sense in that, loosely speaking, all time dependent solutions must converge towards a small subset of the underlying function space. While this analysis provides a partial quantitative and qualitative picture of the long time dynamics of NPS solutions, it does not directly address the question of the stability of individual steady states. 

On the other hand, as mentioned above, several works study the stability of specific steady state solutions, but these results are restricted to equilibrium state states \cite{bothe, ci, np3d, NPS}.

In this paper, we partially bridge the gap in the existing literature by proving that there exist stable, nonequilibrium steady state solutions of NPS with nonzero fluid flow. We note that one can prove analogous results for the Nernst-Planck-Navier-Stokes system; however, due to the difficulty associated with the nonlinearity in the Navier-Stokes equations \cite{cf}, some additional constraints must be added (e.g. restricting to two dimensions or assuming the regularity of the velocity $u$). We shall not pursue this generalization in this paper.

\subsection{Notation}
Unless otherwise stated, we denote by $C$ a positive constant that depends only on the parameters of the system and the domain, but not the boundary and initial conditions, unless otherwise stated. The value of $C$ may differ from line to line.

We denote by $L^p=L^p(\Omega)$ the Lebesgue spaces and by $W^{s,p}=W^{s,p}(\Omega)$, $H^s=H^s(\Omega)=W^{s,2}$ the Sobolev spaces. We also denote by $L^p(\pa\Omega)$ the Lebesgue spaces of functions defined on the boundary. In this latter case, the domain is explicitly indicated. 

We denote by $dx$ the volume element in $\Omega$ and by $dS$ the surface element on $\pa\Omega$.

\section{Preliminaries}\la{ssnpns}
Prior to studying the stability of specific steady state solutions of NPS, we discuss what is known about the steady state system. Theorem 1 of \cite{NPS} establishes the existence of a steady state solution of the Nernst-Planck-Stokes system, which we state below.
\begin{thm}\la{stex}
There exists a smooth solution of the steady state Nernst-Planck-Stokes system
\begin{align}
u\cdot\na c_1&= D_1\div (\na c_1+c_1\na\Phi)\la{s}\\
u\cdot\na c_2&= D_2\div (\na c_2-c_2\na\Phi)\\
-\epsilon\D\Phi&=c_1-c_2=\rho\\
-\nu\D u+\na p &= -K\rho\na\Phi\la{sstokes}\\
\div u&=0\la{S}
\end{align}
on a smooth, connected, bounded domain $\Omega\subset\mathbb{R}^d$ ($d=2,3)$ together with boundary conditions
\begin{align}
    {c_i}_{|\pa\Omega}&=\gamma_i>0,\quad i=1,2\la{b}\\
    \Phi_{|\pa\Omega}&=W\\
    u_{|\pa\Omega}&=0\la{B}
\end{align}
with $\gamma_i,W\in C^\infty(\pa\Omega)$.
\end{thm}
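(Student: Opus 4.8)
The plan is to produce the steady state as a fixed point of a solution operator, with the essential work lying in a priori estimates that replace the dissipative structure available under equilibrium boundary conditions \eqref{eqc1}--\eqref{eqc2}. First I would remove the inhomogeneous data: fix smooth extensions $\bar\gamma_i,\bar W$ of $\gamma_i,W$ into $\Omega$ and write $c_i=\bar\gamma_i+\hat c_i$, $\Phi=\bar W+\hat\Phi$ with $\hat c_i,\hat\Phi\in H^1_0$; the no-slip condition \eqref{B} needs no lift.

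For the fixed point, fix a divergence-free $v\in H^1_0(\Omega)$ (or a slightly stronger space, to guarantee $v\in L^p$ for $p$ large). The first step is to solve the Nernst--Planck--Poisson subsystem — equation \eqref{s}, its $c_2$ counterpart, and \eqref{pois} with data \eqref{b} — for $(c_1,c_2,\Phi)$, with $v$ frozen in the advection terms. I would do this by an inner Schauder iteration: for $\Phi$ given the concentration equations are \emph{linear} and uniquely solvable, feeding $\rho=c_1-c_2$ into \eqref{pois} returns an updated $\Phi$, and compactness of $H^2\hookrightarrow H^1$ closes the loop once the bounds below are in hand. Passing to the Slotboom variables $q_1=c_1 e^{\Phi}$, $q_2=c_2 e^{-\Phi}$ recasts the concentration equations as $\div\bigl(e^{-\Phi}(D_1\na q_1-q_1 v)\bigr)=0$ and $\div\bigl(e^{\Phi}(D_2\na q_2-q_2 v)\bigr)=0$; testing these against truncations and using $\div v=0$ to symmetrize away the transport term, the weak maximum principle gives $c_1,c_2>0$ in $\Omega$ and a De Giorgi iteration gives $L^\infty$ bounds on the $c_i$. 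Then \eqref{pois} and elliptic regularity give $\Phi\in W^{2,p}$, hence $\na\Phi\in L^\infty$ for $d\le 3$.

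Given $(\rho,\Phi)$ from this subsystem, solve the Stokes problem \eqref{sstokes}--\eqref{S}, \eqref{B} with forcing $-K\rho\na\Phi\in L^p$; Stokes regularity gives $u\in W^{2,p}\cap H^1_0$ with $\div u=0$, defining the solution map $\mathcal T:v\mapsto u$, which is continuous and compact. I would then apply the Leray--Schauder theorem, for which fixed points of $\lambda\mathcal T$, $\lambda\in[0,1]$, must be bounded uniformly. The one structural identity that survives is the Maxwell-stress form of the electric force, $\rho\na\Phi=-\epsilon\D\Phi\,\na\Phi=\epsilon\,\div(\na\Phi\otimes\na\Phi-\tfrac12|\na\Phi|^2 I)$; testing the Stokes equation against $u$ converts the forcing term $-K\int\rho\na\Phi\cdot u$ into $-K\epsilon\int\pa_i\Phi\,\pa_j\Phi\,\pa_i u_j\,dx$, bounded by $K\epsilon\|\na\Phi\|_{L^4}^2\|\na u\|_{L^2}$, so that $\nu\|\na u\|_{L^2}^2\lesssim\|\na\Phi\|_{L^4}^2\|\na u\|_{L^2}$ and hence $\|\na u\|_{L^2}\lesssim\|\na\Phi\|_{L^4}^2\lesssim\|\rho\|_{L^2}^2$ via \eqref{pois}. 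In parallel, testing the concentration equations against $c_i-\bar\gamma_i$ and taking a suitable combination produces, besides the Dirichlet energies $\|\na\hat c_i\|_{L^2}^2$, a favorable nonnegative term — explicitly $\tfrac{D}{2\epsilon}\int\rho^2(c_1+c_2)\,dx$ when $D_1=D_2$ — that must absorb the remaining quadratic terms; this yields an $H^1$, hence $L^6$, bound on the $c_i$, and bootstrapping through \eqref{pois} and Stokes then yields the $L^\infty$ bounds on $\na\Phi$ and $u$ needed for the De Giorgi step above.

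The main obstacle is precisely this closure: under \eqref{eqc1}--\eqref{eqc2} one tests the concentration equations against the electrochemical potentials $\mu_1,\mu_2$ and the boundary terms vanish identically, whereas here the nonconstant boundary data leave uncontrolled boundary contributions, so one must instead show that the concentration bound (which a priori depends on $\|v\|$ and $\|\na\Phi\|$), the Stokes estimate (which involves $\|\na\Phi\|_{L^4}$), and the Poisson estimate (which returns $c_i$-bounds to $\na\Phi$-bounds) together form a closed system whose resolution depends only on the data — the difficulty is this self-consistency rather than any single inequality, aggravated by the fact that several of the bad terms carry $O(1)$ rather than small constants and must be absorbed using the degenerate cubic term $\int\rho^2(c_1+c_2)$. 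Once a fixed point with $c_i,\Phi,u\in W^{2,p}$ for all $p<\infty$ is in hand, smoothness follows by a routine bootstrap through \eqref{s}--\eqref{sstokes} with interior and boundary Schauder estimates, using the $C^\infty$ regularity of $\pa\Omega$, $\gamma_i$, and $W$.
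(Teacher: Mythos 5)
First, note that the paper does not prove Theorem \ref{stex} itself: the result is quoted from \cite{NPS}, and the only information given here about its proof is that ``the existence of a solution is obtained via a fixed point theorem,'' together with the pointwise bounds recorded in Proposition \ref{zero}. Your overall skeleton --- lift the boundary data, freeze a divergence-free $v$, solve the Nernst--Planck--Poisson subsystem, solve Stokes, close with Leray--Schauder --- is consistent with that description. The problem is that the decisive a priori estimate is missing, and you say so yourself: you end by declaring the self-consistent closure of the concentration/Poisson/Stokes bounds to be ``the main obstacle'' rather than resolving it. That closure is exactly where the content of the theorem lies, so as written the proposal is a plausible strategy with a hole at its center.

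The idea that fills the hole (and which the present paper quotes from Remark 6 of \cite{NPS} in Proposition \ref{zero}(i)) is a pointwise maximum principle for the coupled steady subsystem: any nonnegative smooth steady solution satisfies $\|c_i\|_{L^\infty}\le\max(\|\gamma_1\|_{L^\infty(\pa\Omega)},\|\gamma_2\|_{L^\infty(\pa\Omega)})$, with a bound independent of $u$ and $\Phi$. The mechanism is the sign structure of the coupling rather than any energy identity: at an interior maximum of $c_1$ one has $\na c_1=0$ and $\D c_1\le 0$, hence $c_1\D\Phi\ge0$, i.e.\ $c_1\le c_2$ there by the Poisson equation, and symmetrically at an interior maximum of $c_2$; combining the two (with the strong maximum principle) pushes the maximum of $\max(c_1,c_2)$ to the boundary. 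Once this uniform $L^\infty$ bound is in hand, $\rho\in L^\infty$ gives $\Phi\in W^{2,p}$ and $\na\Phi\in L^\infty$ by elliptic regularity, the Stokes forcing $-K\rho\na\Phi$ is bounded, and the Leray--Schauder bound on $u$ is immediate: there is no circular dependence left to untangle, and the degenerate term $\int\rho^2(c_1+c_2)\,dx$ is never needed. Two further local problems in your write-up: (i) the transport term in the Slotboom form does \emph{not} ``symmetrize away,'' because the effective drift is $ve^{-\Phi}$ and $\div(ve^{-\Phi})=-e^{-\Phi}v\cdot\na\Phi\neq0$; testing with a truncation of $q_1$ leaves an uncontrolled remainder of the form $\int(v\cdot\na\Phi)e^{-\Phi}F\,dx$, so neither positivity nor the De Giorgi bound follows from $\div v=0$ alone, and whatever bound De Giorgi does give depends on $v$ and $\Phi$, feeding the circularity. (ii) The inner Schauder iteration produces a possibly multivalued map $v\mapsto(c_1,c_2,\Phi)$ unless you also prove uniqueness for the frozen-$v$ subsystem, which is needed for your solution map to be well defined.
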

In the theorem above and hereafter, by ``solution'' to NPS, we always refer to physical solutions that satisfy $c_i\ge 0$.

In \cite{NPS}, the existence of a solution is obtained via a fixed point theorem, so few things can be said precisely about the solution thus obtained. However, below are some pointwise bounds that can be rigorously established.

 \begin{prop}\la{zero}
      Suppose $(c_1^*, c_2^*, u^*)$ is a smooth solution of the steady state NPS system \eqref{s}-\eqref{S}, satisfying boundary conditions \eqref{b}-\eqref{B}. Then
      \begin{enumerate}[(i)]
          \item $\|c_1^*\|_{L^\infty}+\|c_2^*\|_{L^\infty}< A_\gamma$
          \item $\|\na \Phi^*\|_{L^\infty}< B_\gamma$
      \end{enumerate}
      where $A_\gamma, B_\gamma>0$ are constants depending only on the parameters and the boundary conditions, satisfying
      \begin{align}
          A_\gamma+B_\gamma\to 0 \quad\text{as}\quad \|\gamma_1\|_{L^\infty(\pa\Omega)}+\|\gamma_2\|_{L^\infty(\pa\Omega)}+\|\na \tilde W\|_{L^\infty}\to 0
      \end{align}
      where $\tilde W$ is the harmonic extension to $\Omega$ of $W$. 
 \end{prop}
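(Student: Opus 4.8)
The plan is to derive the bounds on $c_i^*$ and $\nabla\Phi^*$ directly from the steady state equations by maximum-principle-type arguments, combined with elliptic estimates for the Poisson equation, and then to track explicitly how the resulting constants scale with the boundary data so that the vanishing claim follows. First I would establish the $L^\infty$ bound on $c_i^*$. Because $u^*$ is divergence-free and tangent to the boundary, testing the first Nernst-Planck equation against a suitable convex function of $c_1^*$ — for instance $\int (c_1^* - \|\gamma_1\|_{L^\infty(\partial\Omega)})_+^{2}\,dx$ or, more robustly, the entropy-type functional built from $c_1^*$ — kills the transport term, and one is left with the dissipation $\int \nabla c_1^*\cdot(\nabla c_1^* + c_1^*\nabla\Phi^*)$. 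Controlling the cross term $\int c_1^* \nabla\Phi^*\cdot \nabla(\text{test})$ requires a bound on $\nabla\Phi^*$, so the two estimates (i) and (ii) are genuinely coupled and must be closed simultaneously; I would set up the argument as a bootstrap/continuity argument in the smallness parameter $\|\gamma_1\|_{L^\infty(\partial\Omega)}+\|\gamma_2\|_{L^\infty(\partial\Omega)}+\|\nabla\tilde W\|_{L^\infty}$.

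For (ii), I would decompose $\Phi^* = \tilde W + \psi$ where $\tilde W$ is the harmonic extension of $W$ (so $\|\nabla\tilde W\|_{L^\infty}$ is controlled by the data as in the statement), and $\psi$ solves $-\epsilon\Delta\psi = \rho^* = c_1^*-c_2^*$ with zero Dirichlet data. Standard elliptic regularity (e.g. $W^{2,p}$ estimates with $p>d$, then Sobolev embedding into $C^1$) gives $\|\nabla\psi\|_{L^\infty}\le C\|\rho^*\|_{L^p}\le C\epsilon^{-1}(\|c_1^*\|_{L^\infty}+\|c_2^*\|_{L^\infty})$. Hence $\|\nabla\Phi^*\|_{L^\infty}\le \|\nabla\tilde W\|_{L^\infty} + C\epsilon^{-1}(\|c_1^*\|_{L^\infty}+\|c_2^*\|_{L^\infty})$, which is the form of (ii) we want provided the concentration bound $A_\gamma$ is itself small.

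To close the loop for (i), I would use the $L^\infty$ maximum principle for the Nernst-Planck equations viewed as linear drift-diffusion equations for $c_i^*$ with drift $u^* \mp D_i\nabla\Phi^*$ — note $u^*$ is divergence-free and $\nabla\Phi^*$ is bounded by the (provisional) quantity from (ii) — so a De Giorgi / Moser iteration or a direct comparison argument yields $\|c_i^*\|_{L^\infty}\le \|\gamma_i\|_{L^\infty(\partial\Omega)}\cdot \exp(C\|\nabla\Phi^*\|_{L^\infty}^2)$ or a similar expression; the precise constant is not important, only that as $\|\gamma_i\|_{L^\infty(\partial\Omega)}\to 0$ and $\|\nabla\tilde W\|_{L^\infty}\to 0$ the right side tends to $0$. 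Substituting back into the bound for $\|\nabla\Phi^*\|_{L^\infty}$ then gives $B_\gamma\to 0$ as well. The main obstacle is the coupling: one cannot bound $c_i^*$ without already controlling $\nabla\Phi^*$, and vice versa, so the argument must be organized as a self-improving smallness estimate — define $M := \|c_1^*\|_{L^\infty}+\|c_2^*\|_{L^\infty}$, show $M \le F(\|\gamma\|, \|\nabla\tilde W\| , M)$ with $F$ sublinear in $M$ near $M=0$, and conclude by a fixed-point/absorption argument that $M$ is bounded by a quantity vanishing with the data. I would carry out the estimates in $d=3$ (the harder case for Sobolev embeddings) to make sure the exponents $p>d$ in the elliptic step are admissible, which they are since all the quantities involved are smooth by Theorem \ref{stex}.
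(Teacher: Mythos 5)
Your treatment of (ii) is exactly the paper's: write $\Phi^*=\tilde W+\psi$ with $-\epsilon\Delta\psi=c_1^*-c_2^*$, $\psi_{|\pa\Omega}=0$, and use $W^{2,p}$ elliptic regularity with $p>d$ plus Sobolev embedding to get $\|\na\Phi^*\|_{L^\infty}\le\|\na\tilde W\|_{L^\infty}+C\epsilon^{-1}(\|c_1^*\|_{L^\infty}+\|c_2^*\|_{L^\infty})$. The problem is your route to (i), and with it the closure of the whole argument. The paper does not run a coupled bootstrap at all: it invokes (Remark 6 of \cite{NPS}) a maximum-principle bound of the form $\|c_i^*\|_{L^\infty}\le\max\bigl(\|\gamma_1\|_{L^\infty(\pa\Omega)},\|\gamma_2\|_{L^\infty(\pa\Omega)}\bigr)$ whose constant is \emph{completely independent} of $\na\Phi^*$ and of $u^*$. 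That independence is the whole content of the proposition; once you have it, (ii) follows in one line and there is nothing to decouple. The mechanism behind it is structural: the flux is $c_i\na\mu_i$ with $\mu_i=\log c_i\pm\Phi$, and at an interior maximum of $\max(c_1^*,c_2^*)$ the zeroth-order term $c_i^*\Delta\Phi^*=\mp c_i^*\rho^*/\epsilon$ has a favorable sign because $\rho^*=c_1^*-c_2^*$ is controlled by which concentration achieves the maximum. Your proposal never engages with this sign structure; your ``testing against a convex function'' sketch stalls precisely at the sign-indefinite term $c_1^*\Delta\Phi^*$, which is the actual obstruction.

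The bootstrap you propose instead does not close. First, the self-improvement inequality you write, $M\le\|\gamma\|_{L^\infty(\pa\Omega)}\exp\bigl(C(\|\na\tilde W\|_{L^\infty}+C\epsilon^{-1}M)^2\bigr)$, is superlinear (indeed exponential) in $M$ for large $M$, so it is satisfied both by small $M$ and by large $M$; an absorption argument alone cannot select the small branch. The standard fix --- a continuity/continuation argument in the size of the boundary data --- is unavailable here because the proposition concerns an \emph{arbitrary given} smooth steady state, and steady states are not known to be unique (the paper says so explicitly), so there is no well-defined continuous branch to continue along. Second, your De Giorgi/Moser constant necessarily depends on the full drift $u^*\mp D_i\na\Phi^*$, and $\|u^*\|_{L^\infty}$ is itself an unknown controlled only through the Stokes equation by $\|\rho^*\na\Phi^*\|$, which feeds back into $M$ yet again; you do not account for this third strand of the circularity. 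In short, the decomposition for (ii) is fine, but (i) requires the $\Phi^*$-independent maximum principle (or an equivalent use of the electrochemical potentials), not an a priori smallness bootstrap.
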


\begin{proof}
The proof of (i) is contained in Remark 6 of \cite{NPS}. In particular, we may take $$A_\gamma = \max(\|\gamma_1\|_{L^\infty(\pa\Omega)}, \|\gamma_2\|_{L^\infty(\pa\Omega)}),$$ 
Then, from elliptic regularity and Sobolev embeddings applied to the Poisson equation
\begin{align*}
    -\epsilon\D (\Phi^*-\tilde W)=c_1^*-c_2^*
\end{align*}
we obtain
\begin{align*}
    &\|\na(\Phi^*-\tilde W)\|_{L^\infty}\le C\|\Phi^*-\tilde W\|_{W^{2,4}}\le C\|c_1^*-c_2^*\|_{L^4}\\
    &\Rightarrow \|\na\Phi^*\|_{L^\infty}\le C(\|c_1^*\|_{L^4}+\|c_2^*\|_{L^4})+\|\na\tilde W\|_{L^\infty}
\end{align*}
from which (ii) follows, using (i). 
\end{proof}

Now, given that our goal is to study the stability of nonequilibrium steady state solutions with nonzero fluid flow, we ask under what conditions the solutions obtained via Theorem \ref{stex} are known to satisfy $u^*\not\equiv 0$. In fact, there is a sufficient condition that guarantees this:

\begin{thm}\la{boundarythm}
    (Theorem 2 of \cite{NPS}) Suppose $(c_1^*, c_2^*, u^*)$ is a smooth solution of the steady state NPS system \eqref{s}-\eqref{S}, satisfying boundary conditions \eqref{b}-\eqref{B}. Suppose in addition that the boundary conditions satisfy
    \begin{align}
        \int_{\pa\Omega}(\gamma_1-\gamma_2)(n\times\nabla) W\,dS\neq 0\quad \text{or}\quad \int_{\pa\Omega}W(n\times\nabla)(\gamma_1-\gamma_2)\,dS\neq 0\la{bints}
    \end{align}
    where $n$ is the outward unit normal along $\pa \Omega$. Then $u^*\not\equiv 0$.
\end{thm}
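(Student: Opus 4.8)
The plan is to argue by contradiction: I assume $u^*\equiv 0$ and show that then \emph{both} of the integrals in \eqref{bints} must vanish, contradicting the hypothesis. If $u^*\equiv 0$, the steady Stokes equation \eqref{sstokes} collapses to $\na p=-K\rho^*\na\Phi^*$, so the vector field $\rho^*\na\Phi^*$ is a gradient, hence curl-free on $\Omega$; this uses only the momentum equation together with the boundary data, not the Nernst-Planck equations, and it imposes no topological restriction on $\Omega$. Since $\na\Phi^*$ is itself curl-free, the identity $\na\times(\rho^*\na\Phi^*)=\na\rho^*\times\na\Phi^*$ (in $d=3$; the scalar analogue $\pa_1\rho^*\pa_2\Phi^*-\pa_2\rho^*\pa_1\Phi^*$ in $d=2$) then forces $\na\rho^*\times\na\Phi^*\equiv 0$ pointwise in $\Omega$, and the same computation with $\rho^*$ and $\Phi^*$ interchanged shows $\Phi^*\na\rho^*$ is curl-free as well.

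The next step is to convert these pointwise identities into the boundary integrals of \eqref{bints} via the classical identity $\int_\Omega\na\times F\,dx=\int_{\pa\Omega}n\times F\,dS$ (and its two-dimensional scalar form). With $F=\rho^*\na\Phi^*$ this gives $\int_{\pa\Omega}\rho^*\,(n\times\na\Phi^*)\,dS=0$, and with $F=\Phi^*\na\rho^*$ it gives $\int_{\pa\Omega}\Phi^*\,(n\times\na\rho^*)\,dS=0$. I would then note that along $\pa\Omega$ one has $n\times\na g=n\times\na_\Gamma g$, where $\na_\Gamma$ denotes the tangential (surface) gradient, because the normal component of $\na g$ is annihilated by $n\times(\cdot)$; consequently $n\times\na\Phi^*$ and $n\times\na\rho^*$ on $\pa\Omega$ depend only on the traces $\Phi^*|_{\pa\Omega}=W$ and $\rho^*|_{\pa\Omega}=c_1^*|_{\pa\Omega}-c_2^*|_{\pa\Omega}=\gamma_1-\gamma_2$. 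Substituting these identifies the two vanishing integrals with $\int_{\pa\Omega}(\gamma_1-\gamma_2)(n\times\na)W\,dS$ and $\int_{\pa\Omega}W(n\times\na)(\gamma_1-\gamma_2)\,dS$, which is the desired contradiction with \eqref{bints}. The same identity applied to arbitrary smooth extensions of the boundary data in fact shows that these two integrals are always negatives of one another, so the two alternatives in \eqref{bints} are equivalent; in $d=2$ this is simply integration by parts around the closed curve $\pa\Omega$.

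This argument carries no delicate analytic estimate, so there is no single hard step; the points deserving care are the uniform treatment of $d=2$ and $d=3$ and, most of all, the correct interpretation of the operator $(n\times\na)$ acting on a function defined only on $\pa\Omega$, together with the verification that $(n\times\na\Phi^*)|_{\pa\Omega}$ is indeed determined by $W$ alone. I would also make explicit that, to reach the \emph{second} integral in \eqref{bints}, one should pass to the boundary through the curl-free field $\Phi^*\na\rho^*$ rather than try to integrate the first boundary integral by parts on $\pa\Omega$, which in $d=3$ would introduce spurious curvature terms.
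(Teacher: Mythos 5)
Your argument is correct and is essentially the standard proof of this statement: the paper itself does not reprove it but defers to Theorem 2 of the cited reference, where the reasoning is exactly what you give --- if $u^*\equiv 0$ then $\rho^*\na\Phi^*=-\na p/K$ is a gradient of a globally defined pressure (so no simple-connectedness is needed), hence curl-free together with $\Phi^*\na\rho^*$, and the identity $\int_\Omega\na\times F\,dx=\int_{\pa\Omega}n\times F\,dS$ combined with the observation that $n\times\na g$ on $\pa\Omega$ sees only the tangential gradient of the trace forces both integrals in \eqref{bints} to vanish. Your supplementary remarks, that the two boundary integrals are negatives of one another and that the $d=2$ case is the scalar-curl analogue, are likewise correct.
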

 \begin{rem}
     We note that each component of $n\times\nabla$ is a vector field tangent to $\pa\Omega$. Thus the integrals \eqref{bints} are well defined and can be computed with just knowledge of the prescribed values of $c_i$ and $\Phi$ on $\pa\Omega$. In particular, we observe that \eqref{bints} will be satisfied for almost any choice of ``rough'' or oscillatory boundary conditions.
 \end{rem}

Lastly, for there to be any hope of stability for nonequilibrium steady states, it must at least be the case that all time dependent solutions asymptotically obey the same bounds satisfied by the corresponding steady states (Proposition \ref{zero}); the following theorem verifies this fact.

\begin{thm}\la{alphaa} (Theorem 3 of \cite{NPS}) Let $(c_1, c_2, u)$ be a smooth solution of the NPS system \eqref{np}-\eqref{nse} with boundary conditions \eqref{BC}. For any $\alpha>0$, there exists $T_\alpha$ depending additionally on parameters, and initial and boundary conditions such that for all $t\ge T_\alpha$, 
    \begin{align}\la{hmm}
        \|c_1(t)\|_{L^\infty}+\|c_2(t)\|_{L^\infty}\le A_\gamma + \alpha
    \end{align}
    where $A_\gamma$ is as in Proposition \ref{zero}.
\end{thm}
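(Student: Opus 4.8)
The plan is to reduce \eqref{hmm} to showing that $\|(c_i(t)-M)_+\|_{L^\infty(\Omega)}\to 0$ as $t\to\infty$ for $i=1,2$, where $M:=A_\gamma$; since $A_\gamma\ge\|\gamma_i\|_{L^\infty(\partial\Omega)}$, this gives \eqref{hmm} once $\|(c_i(t)-M)_+\|_{L^\infty}\le\alpha/2$. The engine is a truncated energy estimate. For an integer $p\ge 2$ set $v_i:=(c_i-M)_+$, which vanishes on $\partial\Omega$ since $c_i|_{\partial\Omega}=\gamma_i\le M$. Multiplying the $i$-th equation of \eqref{np} by $v_i^{p-1}$ and integrating over $\Omega$: the transport term equals $\tfrac1p\int u\cdot\nabla(v_i^p)$ and vanishes because $\div u=0$ and $u|_{\partial\Omega}=0$; the pure diffusion term gives the dissipation $-\tfrac{4D_i(p-1)}{p^2}\|\nabla v_i^{p/2}\|_{L^2}^2$; and the electromigration term $\pm D_i\div(c_i\nabla\Phi)$ (sign $+$ for $i=1$, $-$ for $i=2$), after integration by parts (the boundary term vanishes as $v_i^{p-1}|_{\partial\Omega}=0$) and substituting $c_i=v_i+M$ and $\Delta\Phi=-\epsilon^{-1}(c_1-c_2)$ on $\{v_i>0\}$, produces negative-sign terms $\int v_i^{p+1}$, $M\int v_i^p$, $M^2\int v_i^{p-1}$ together with positive-sign cross terms $\int v_i^p c_j$ and $M\int v_i^{p-1}c_j$ ($j\ne i$).

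The key step is to estimate the cross terms using $c_j\le v_j+M$ (which holds everywhere, since $v_j+M=\max(c_j,M)$) and then take the combination $\tfrac1{D_1}(\cdot)+\tfrac1{D_2}(\cdot)$ of the two resulting estimates. With these diffusivity weights the interaction terms neutralize exactly: the $M$-weighted diagonal contributions from the $v_i+M$ and the $v_j+M$ substitutions cancel term by term; the cubic cross terms $\int v_1^p v_2$, $\int v_1v_2^p$ are absorbed into $\int v_1^{p+1}+\int v_2^{p+1}$ via $v_1^pv_2\le\tfrac{p}{p+1}v_1^{p+1}+\tfrac1{p+1}v_2^{p+1}$; and $\int v_1^{p-1}v_2+\int v_1v_2^{p-1}\le\int v_1^p+\int v_2^p$ cancels the surviving $-M\epsilon^{-1}(\int v_1^p+\int v_2^p)$. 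What remains is the heat-equation-type inequality
\[
\frac1p\,\frac{d}{dt}\Big(\tfrac{1}{D_1}\|v_1\|_{L^p}^p+\tfrac{1}{D_2}\|v_2\|_{L^p}^p\Big)+\frac{4(p-1)}{p^2}\sum_{i=1}^2\big\|\nabla v_i^{p/2}\big\|_{L^2(\Omega)}^2\le 0,
\]
valid for every integer $p\ge 2$: the nonlinear electromigration contributions have disappeared by algebra rather than by being dominated.

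Two consequences follow. Writing $Y_p(t)$ for the bracketed quantity, the inequality gives $Y_p(t)\le Y_p(0)$; a standard $p\to\infty$ argument (using $\|v_i(t)\|_{L^p}^p\le D_iY_p(0)\le C_0(\max_j\|v_j(0)\|_{L^\infty})^p$ with $C_0$ independent of $p$) then yields the global-in-time bound $\|c_i(t)\|_{L^\infty}\le R:=\max(M,\|c_1(0)\|_{L^\infty},\|c_2(0)\|_{L^\infty})$ for all $t\ge 0$. Taking instead $p=2$ and applying Poincaré's inequality to $v_i$ (legitimate since $v_i|_{\partial\Omega}=0$) gives $\tfrac{d}{dt}Y_2\le-\lambda Y_2$ for some $\lambda=\lambda(\Omega,D_1,D_2)>0$, so $\|v_i(t)\|_{L^2(\Omega)}\to 0$ exponentially. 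To upgrade $L^2$-decay to $L^\infty$-decay, observe that each $c_i$ solves a uniformly parabolic equation with drift $u\mp D_i\nabla\Phi$ and zeroth-order coefficient $\mp D_i\epsilon^{-1}\rho$; from $\|c_i(t)\|_{L^\infty}\le R$, elliptic regularity for $-\epsilon\Delta\Phi=\rho$ (with smooth datum $W$) and Stokes regularity for $u$ with the $L^\infty$-in-time bounded forcing $-K\rho\nabla\Phi$ give $\|\nabla\Phi(t)\|_{L^\infty}+\|u(t)\|_{L^\infty}\le C$ uniformly for $t\ge 1$. Hence the drift and zeroth-order coefficient are bounded uniformly for $t\ge 1$, and the De Giorgi--Nash--Moser parabolic estimates (interior, and up to the smooth boundary where $c_i=\gamma_i$ is smooth and $t$-independent) bound $\|c_i(t)\|_{C^{\beta}(\overline\Omega)}$ for some $\beta\in(0,1)$, uniformly for $t\ge 1$. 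Since $(\cdot)_+$ is $1$-Lipschitz this bounds $\|v_i(t)\|_{C^\beta(\overline\Omega)}$ uniformly, and together with $\|v_i(t)\|_{L^2}\to 0$ and the elementary interpolation $\|v_i(t)\|_{L^\infty}\le C\|v_i(t)\|_{C^\beta}^{d/(d+2\beta)}\|v_i(t)\|_{L^2}^{2\beta/(d+2\beta)}$, it forces $\|v_i(t)\|_{L^\infty}\to 0$, completing the argument.

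The step I expect to be the main obstacle is the algebraic bookkeeping in the second paragraph: one must verify that, with the weights $1/D_i$, every electromigration contribution either cancels identically or is absorbed into the dissipation leaving no residual of the wrong sign — it is this exact cancellation, and not a smallness hypothesis, that makes the estimate valid for arbitrary Dirichlet data and unequal diffusivities. The $L^\infty$-upgrade is routine but relies on the uniform-in-time bounds on $u$ and $\nabla\Phi$, which in turn use only the global $L^\infty$ bound on the $c_i$ obtained along the way.
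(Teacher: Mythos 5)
The paper does not actually prove this statement --- it is imported verbatim as Theorem 3 of \cite{NPS} --- so there is no in-text proof to compare against; judged on its own terms, your argument is correct and self-contained. I checked the cancellation you flagged as the main risk: writing $G(s)=\tfrac{s^p}{p}+\tfrac{Ms^{p-1}}{p-1}$, the electromigration term for $c_1$ becomes $-\tfrac{D_1(p-1)}{\epsilon}\int G(v_1)(c_1-c_2)\,dx$ after two integrations by parts (both boundary terms vanish since $G(0)=0$), and on $\{v_1>0\}$ one has $c_1-c_2\ge v_1-v_2$, with the reverse inequality on $\{v_2>0\}$ for the $c_2$ equation; after weighting by $1/D_i$ and summing, the off-diagonal remainders $\tfrac{p-1}{\epsilon p}(\int v_1^pv_2+\int v_2^pv_1)$ and $\tfrac{M}{\epsilon}(\int v_1^{p-1}v_2+\int v_2^{p-1}v_1)$ are indeed dominated by the surviving diagonal terms $\tfrac{p-1}{\epsilon p}\sum\int v_i^{p+1}$ and $\tfrac{M}{\epsilon}\sum\int v_i^p$ via Young, so the clean inequality $\tfrac1p\tfrac{d}{dt}Y_p+\tfrac{4(p-1)}{p^2}\sum\|\nabla v_i^{p/2}\|_{L^2}^2\le 0$ holds for every $p\ge2$ with no smallness assumption. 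The subsequent steps (the $p\to\infty$ limit for the global $L^\infty$ bound, exponential $L^2$ decay of $v_i$ via Poincar\'e, uniform-in-time bounds on $\nabla\Phi$ and $u$ from elliptic and Stokes regularity, uniform H\"older bounds from De Giorgi--Nash--Moser, and the $C^\beta$--$L^2$ interpolation) are all standard and correctly assembled; the only care needed is that the interpolation ball may exit $\Omega$ near $\partial\Omega$, which the interior cone condition (or $v_i|_{\partial\Omega}=0$) handles. One discrepancy worth noting: your proof yields $\max_i\|c_i(t)\|_{L^\infty}\le A_\gamma+\alpha$ rather than the literal bound \eqref{hmm} on the \emph{sum}. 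But the literal statement with $A_\gamma=\max_i\|\gamma_i\|_{L^\infty(\partial\Omega)}$ cannot be right as written (take $\gamma_1=\gamma_2\equiv 1$: the boundary trace alone forces $\|c_1\|_{L^\infty}+\|c_2\|_{L^\infty}\ge 2>A_\gamma$, and the same objection applies to Proposition \ref{zero}(i)), so the max formulation you prove is evidently the intended one, and it is all that is used downstream in the stability theorem.
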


\section{Stability of Weak Electrokinetic Flow}
We remark that Theorem \ref{alphaa} brings time dependent solutions close to the steady states, in a very loose sense. Our main result below shows that for sufficiently \textit{small} steady state solutions, \eqref{hmm} is in fact close enough to guarantee that time dependent solutions asymptotically converge to them.
\begin{thm}
    There exists $\delta>0$, depending only on parameters, such that if $\gamma_1, \gamma_2, W\in C^\infty(\pa\Omega)$ with
    \begin{align}
    \|\gamma_1\|_{L^\infty(\pa\Omega)}+\|\gamma_2\|_{L^\infty(\pa\Omega)}+\|\na \tilde W\|_{L^\infty}<\delta\la{small}
    \end{align}
    then solutions of the NPS system \eqref{np}-\eqref{nse}, with boundary conditions 
    \be
    \bal
    {c_i}_{|\pa\Omega}&=\gamma_i>0,\quad i=1,2\\
    \Phi_{|\pa\Omega}&=W\\
    u_{|\pa\Omega}&=0\la{BB}
    \eal
    \ee
    are globally asymptotically stable. That is, for boundary conditions satisfying \eqref{small}, there exists a unique, corresponding, steady state solution $(c_1^*, c_2^*, u^*)$, and $(c_1(t), c_2(t), u(t))\to (c_1^*, c_2^*, u^*)$ in $L^2$ as $t\to\infty$, regardless of initial conditions. Furthermore, if the boundary conditions satisfy \eqref{bints}, then $u^*\not\equiv 0$.
\end{thm}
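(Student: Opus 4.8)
The plan is a perturbative energy argument: treat the prescribed boundary data as a small parameter, and show that the diffusive/viscous dissipation dominates every other term in the $L^2$ balance for the difference of two solutions. Fix a steady state $(c_1^*,c_2^*,u^*,\Phi^*)$ provided by Theorem~\ref{stex}, and let $(c_1,c_2,u,\Phi)$ be any global smooth solution of \eqref{np}--\eqref{nse} with the same boundary data. Put $\sigma_i=c_i-c_i^*$, $v=u-u^*$, $\Psi=\Phi-\Phi^*$, so that $\sigma_i,\Psi,v$ all vanish on $\pa\Omega$, $\div v=0$, and, with $\rho^*=c_1^*-c_2^*$,
\begin{align*}
&\pa_t\sigma_1+u^*\cdot\na\sigma_1+v\cdot\na c_1^*+v\cdot\na\sigma_1=D_1\div(\na\sigma_1+c_1^*\na\Psi+\sigma_1\na\Phi^*+\sigma_1\na\Psi),\\
&\pa_t\sigma_2+u^*\cdot\na\sigma_2+v\cdot\na c_2^*+v\cdot\na\sigma_2=D_2\div(\na\sigma_2-c_2^*\na\Psi-\sigma_2\na\Phi^*-\sigma_2\na\Psi),\\
&-\epsilon\D\Psi=\sigma_1-\sigma_2,\qquad \pa_t v-\nu\D v+\na q=-K\big((\sigma_1-\sigma_2)\na\Psi+(\sigma_1-\sigma_2)\na\Phi^*+\rho^*\na\Psi\big).
\end{align*}
I record two facts about $\Psi$: testing the Poisson equation against $\Psi$ (and using Poincaré, since $\Psi|_{\pa\Omega}=0$) gives $\|\na\Psi\|_{L^2}\le C\|\sigma_1-\sigma_2\|_{L^2}$, and, fixing $p>d$, elliptic regularity with $W^{2,p}\hookrightarrow W^{1,\infty}$ gives $\|\na\Psi\|_{L^\infty}\le C\|\sigma_1-\sigma_2\|_{L^\infty}$.

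Next I would test the $\sigma_i$-equations by $\sigma_i$, the $v$-equation by $v$, integrate over $\Omega$ and add. The transport self-terms $\int(u^*\cdot\na\sigma_i)\sigma_i$, $\int(v\cdot\na\sigma_i)\sigma_i$ and the pressure term drop by incompressibility and the vanishing boundary values. Of the remaining terms: $\int(v\cdot\na c_i^*)\sigma_i=-\int c_i^*\,v\cdot\na\sigma_i$ after integrating by parts (using $\div v=0$, $v\cdot n=0$), so $c_i^*$ enters only through $\|c_i^*\|_{L^\infty}$; the $\na\Phi^*$-terms are bounded by $\|\na\Phi^*\|_{L^\infty}$ times a product of two gradient norms; the $c_i^*\na\Psi$- and $\rho^*\na\Psi$-terms use $\|\na\Psi\|_{L^2}\le C\|\sigma_1-\sigma_2\|_{L^2}$ together with $\|c_i^*\|_{L^\infty},\|\rho^*\|_{L^\infty}$; and the cubic $\sigma_i\na\Psi$-terms use $\|\na\Psi\|_{L^\infty}\le C\|\sigma_1-\sigma_2\|_{L^\infty}$. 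Applying Young's and the Poincaré inequality everywhere (to convert $L^2$ norms of $\sigma_i,v,\Psi$ into gradient norms) yields
\begin{align*}
\fr12\fr{d}{dt}\big(\|\sigma_1\|_{L^2}^2 &+\|\sigma_2\|_{L^2}^2+\|v\|_{L^2}^2\big)+D_1\|\na\sigma_1\|_{L^2}^2+D_2\|\na\sigma_2\|_{L^2}^2+\nu\|\na v\|_{L^2}^2\\
&\le C\Lambda\big(\|\na\sigma_1\|_{L^2}^2+\|\na\sigma_2\|_{L^2}^2+\|\na v\|_{L^2}^2\big),
\end{align*}
where $\Lambda:=\|c_1^*\|_{L^\infty}+\|c_2^*\|_{L^\infty}+\|\na\Phi^*\|_{L^\infty}+\|\sigma_1\|_{L^\infty}+\|\sigma_2\|_{L^\infty}$ and $C$ depends only on the parameters and $\Omega$. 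By Proposition~\ref{zero} the first three terms of $\Lambda$ are $<A_\gamma+B_\gamma\to0$ as the boundary data vanish, so only $\|\sigma_i\|_{L^\infty}$ remains to be controlled.

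I would then close in two cases. If $(c_1,c_2,u)$ is itself a second steady state, then $\fr{d}{dt}(\cdots)=0$ and $\|\sigma_i\|_{L^\infty}<2A_\gamma$ by Proposition~\ref{zero}(i), so $\Lambda<3A_\gamma+B_\gamma$; choosing $\delta$ so that \eqref{small} forces $C\Lambda\le\tfrac12\min(D_1,D_2,\nu)$, the inequality collapses to $\|\na\sigma_1\|_{L^2}=\|\na\sigma_2\|_{L^2}=\|\na v\|_{L^2}=0$, hence $\sigma_i\equiv0$, $v\equiv0$: the steady state is unique. For a general solution, Theorem~\ref{alphaa} gives, for each $\alpha>0$, a time $T_\alpha$ with $\|c_i(t)\|_{L^\infty}\le A_\gamma+\alpha$ for $t\ge T_\alpha$, hence $\Lambda(t)\le 3A_\gamma+B_\gamma+\alpha$ there; shrinking $\delta$ and fixing $\alpha$ small so that $C(3A_\gamma+B_\gamma+\alpha)\le\tfrac12\min(D_1,D_2,\nu)$ under \eqref{small}, the inequality together with Poincaré gives $\fr{d}{dt}X+cX\le0$ on $[T_\alpha,\infty)$, with $X=\|\sigma_1\|_{L^2}^2+\|\sigma_2\|_{L^2}^2+\|v\|_{L^2}^2$ and $c>0$. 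Since $X(T_\alpha)<\infty$ (smooth solutions), Gronwall gives $X(t)\le X(T_\alpha)e^{-c(t-T_\alpha)}\to0$, i.e.\ $c_i(t)\to c_i^*$ and $u(t)\to u^*$ in $L^2$. The assertion $u^*\not\equiv0$ under \eqref{bints} is exactly Theorem~\ref{boundarythm}, and \eqref{bints} is compatible with \eqref{small} (e.g.\ for suitably scaled oscillatory boundary data, cf.\ the Remark after Theorem~\ref{boundarythm}).

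The only substantive difficulty is the bookkeeping in the second paragraph: one must verify that \emph{every} term in the energy balance other than the diffusion and viscosity carries a prefactor — captured by $\Lambda$ — that vanishes with the boundary data, i.e.\ is bounded by $C\Lambda$ times gradient norms. Two devices are essential: (i) integrating $\int(v\cdot\na c_i^*)\sigma_i$ by parts, so that $c_i^*$ enters only through $\|c_i^*\|_{L^\infty}$, which Proposition~\ref{zero} controls — unlike $\|\na c_i^*\|$, for which no quantitative smallness is available; and (ii) the cubic $\na\Psi$-terms are absorbed only thanks to the a posteriori smallness of $\|\sigma_i(t)\|_{L^\infty}$ for large $t$ coming from Theorem~\ref{alphaa}, which is precisely why the conclusion is asymptotic (rather than, say, Lyapunov-in-a-strong-norm) stability. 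Finally, because the fluid equation is Stokes and not Navier--Stokes, no term in $\na v$ other than $\nu\|\na v\|_{L^2}^2$ ever appears, so the argument runs identically in $d=2$ and $d=3$.
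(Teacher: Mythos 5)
Your proposal is correct and follows essentially the same route as the paper: an $L^2$ energy estimate for the differences $c_i-c_i^*$, $u-u^*$, with Proposition \ref{zero} supplying smallness of $\|c_i^*\|_{L^\infty}$ and $\|\na\Phi^*\|_{L^\infty}$, Theorem \ref{alphaa} supplying the asymptotic smallness of $\|c_i(t)\|_{L^\infty}$ for $t\ge T_\alpha$, and Poincar\'e plus Gr\"onwall yielding exponential decay, with uniqueness and $u^*\not\equiv 0$ handled exactly as in the paper. If anything you are more careful than the paper, whose displayed difference equations \eqref{lin1}--\eqref{lin2} omit the advection terms $u\cdot\na(c_i-c_i^*)+(u-u^*)\cdot\na c_i^*$; your integration by parts of $\int_\Omega (v\cdot\na c_i^*)\sigma_i\,dx$, which makes this term carry the small prefactor $\|c_i^*\|_{L^\infty}$ rather than the uncontrolled $\|\na c_i^*\|$, is precisely the right way to absorb them.
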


\begin{proof}
    The existence of a steady state solution $(c_1^*, c_2^*, u^*)$ is guaranteed by Theorem \ref{stex}. Then the Nernst-Planck equations \eqref{np} may be rewritten in terms of $c_1-c_1^*$ and $c_2-c_2^*$ as
    \begin{align}
        \pa_t(c_1-c_1^*)&=D_1\div(\na(c_1-c_1^*)+c_1\na(\Phi-\Phi^*)+(c_1-c_1^*)\na\Phi^*)\la{lin1}\\
        \pa_t(c_2-c_2^*)&=D_2\div(\na(c_2-c_2^*)-c_2\na(\Phi-\Phi^*)-(c_2-c_2^*)\na\Phi^*).\la{lin2}
    \end{align}
    Multiplying \eqref{lin1} by $c_1-c_1^*$ and integrating by parts, we obtain
    \be
    \bal
        \fr{1}{2}\fr{d}{dt}\|c_1-c_1^*\|_{L^2}^2+D_1\|\na(c_1-c_1^*)\|_{L^2}^2 =& -D_1\int_\Omega c_1\na(\Phi-\Phi^*)\cdot\na(c_1-c_1^*)\,dx\\
        &-D_1\int_\Omega (c_1-c_1^*)\na\Phi^*\cdot\na(c_1-c_1^*)\,dx.
    \eal
    \ee
    Then from elliptic estimates $\|c_1-c_1^*\|_{L^2}\le C\|\na (c_1-c_1^*)\|_{L^2}$ and $\|\na(\Phi-\Phi^*)\|_{L^2}\le C(\|\na (c_1-c_1^*)\|_{L^2}+\|\na (c_2-c_2^*)\|_{L^2})$ we obtain
    \be
    \bal\la{kek}
        \fr{1}{2}\fr{d}{dt}\|c_1-c_1^*\|_{L^2}^2+D_1\|\na(c_1-c_1^*)\|_{L^2}^2 \le& D_1C_\Gamma(\|c_i\|_{L^\infty}+\|\na\Phi^*\|_{L^\infty})(\|\na(c_1-c_1^*)\|^2+\|\na(c_2-c_2^*)\|_{L^2}^2)
    \eal
    \ee
    where $C_\Gamma$ depends only on parameters (excluding boundary and initial conditions). Now, referring to Proposition \ref{zero}, we choose $\delta>0$ small enough so that $B_\gamma<\fr{1}{8C_\Gamma}$. Then we choose $\alpha>0$ and choose $\delta$ smaller (if necessary) so that $A_\gamma+\alpha<\fr{1}{8C_\Gamma}$. These choices are possible by Proposition \ref{zero} and Theorem \ref{alphaa}. Then for $t$ large enough (that is, larger than $T_\alpha$, Theorem \ref{alphaa}), we have
    \be
    \bal\la{molla}
        \fr{1}{2}\fr{d}{dt}\|c_1-c_1^*\|_{L^2}^2+D_1\|\na(c_1-c_1^*)\|_{L^2}^2 \le& \fr{D_1}{4}(\|\na(c_1-c_1^*)\|_{L^2}^2+\|\na(c_2-c_2^*)\|_{L^2}^2)
    \eal
    \ee
    We obtain analogous estimates for $c_2$, and then adding to \eqref{molla}, we obtain
    \begin{align}\la{haha}
    \fr{1}{2}\fr{d}{dt}(\|c_1(t)-c_1^*\|_{L^2}^2+\|c_2(t)-c_2^*\|_{L^2}^2)+\fr{D}{2}(\|\na(c_1(t)-c_1^*)\|_{L^2}^2+\|\na(c_2(t)-c_2^*)\|_{L^2}^2) \le 0
    \end{align}
    for $D=\min\{D_1, D_2\}$ and $t\ge T_\alpha$. Then, by using the following Poincaré inequality
    \begin{align}
        \|c_i-c_i^*\|_{L^2}\le C\|\na(c_i-c_i^*)\|_{L^2}
    \end{align}
   we obtain from \eqref{haha}
    \begin{align}\la{AHHH}
        \fr{1}{2}\fr{d}{dt}(\|c_1(t)-c_1^*\|_{L^2}^2+\|c_2(t)-c_2^*\|_{L^2}^2)+\tilde C(\|c_1(t)-c_1^*\|_{L^2}^2+\|c_2(t)-c_2^*\|_{L^2}^2) \le 0
    \end{align}
   which holds for large $t$, $t\ge T_\alpha$. 

    As for the fluid equation, \eqref{nse} may be rewritten in terms of $u-u^*$,
    \begin{align}
        \pa_t(u-u^*)-\nu\D(u-u^*)+\na(p-p^*)= -K\rho\na(\Phi-\Phi^*)-K(\rho-\rho^*)\na\Phi^*.
    \end{align}
    Taking the inner product of the above with $u-u^*$ and integrating by parts, we obtain
    \be
    \bal
        \fr{1}{2}\fr{d}{dt}\|u-u^*\|_{L^2}^2+\nu\|\na(u-u^*)\|_{L^2}^2=&-K\int_\Omega\rho\na(\Phi-\Phi^*)\cdot(u-u^*)\,dx\\
        &-K\int_\Omega(\rho-\rho^*)\na\Phi^*\cdot(u-u^*)\,dx.
    \eal
    \ee
    Then applying elliptic estimates as in \eqref{kek}, we obtain
    \be
    \bal
        \fr{1}{2}\fr{d}{dt}\|u-u^*\|_{L^2}^2+\nu\|\na(u-u^*)\|_{L^2}^2\le &C(\|c_1\|_{L^\infty}+\|c_2\|_{L^\infty}+\|\na\Phi^*\|_{L^\infty})\times\\
        &(\|c_1-c_1^*\|_{L^2}+\|c_2-c_2^*\|_{L^2})\|\na(u-u^*)\|_{L^2}
    \eal
    \ee
    Then, again ensuring that our choices of $\delta$ and $\alpha$ are small enough, we conclude that for $t\ge T_\alpha$, we have
    \be
    \bal
        \fr{1}{2}\fr{d}{dt}\|u-u^*\|_{L^2}^2+\fr{\nu}{2}\|\na(u-u^*)\|_{L^2}^2\le &\fr{\tilde C}{2}(\|c_1-c_1^*\|_{L^2}+\|c_2-c_2^*\|_{L^2}).
    \eal
    \ee
    where $\tilde C$ is from \eqref{AHHH}. Adding this estimate to \eqref{AHHH}, and using the Poincaré estimate $\|u-u^*\|_{L^2}\le C\|\na(u-u^*)\|_{L^2}$, we finally obtain
    \begin{align}
        \fr{d}{dt}\mathcal{E}(t)+r\mathcal{E}(t)\le 0
    \end{align}
    for all $t\ge T_\alpha$, for 
    \begin{align}
        \mathcal{E}(t)=\|c_1(t)-c_1^*\|_{L^2}^2+\|c_2(t)-c_2^*\|_{L^2}^2+\|u(t)-u^*\|_{L^2}
    \end{align}
    and $r>0$ depending only on parameters. Then a Grönwall estimate implies
    \be
    \bal
        \mathcal{E}(t) \le \mathcal{E}(T_\alpha)e^{-r(t-T_\alpha)}
    \eal
    \ee
    for all $t\ge T_\alpha$. In particular,
    \begin{align}
        \|c_1(t)-c_1^*\|_{L^2}^2+\|c_2(t)-c_2^*\|_{L^2}^2+\|u(t)-u^*\|_{L^2}\to 0
    \end{align}
    as $t\to\infty$, as was to be shown.
    
    Lastly, we note that, a priori, the steady state solution $(c_1^*, c_2^*, u^*)$ whose stability we proved, need not have been unique. However, our proof of global stability serves as a proof that in fact, steady state solutions are unique, given small boundary data, in the sense of \eqref{small}. A direct proof of uniqueness, without appealing to the time dependent NPS system, would follow from similar estimates as above, applied to the difference between two steady state solutions.
\end{proof}

{\bf{Acknowledgment.}} The author was partially supported by an AMS-Simons Travel Grant.

{\bf{Conflicts of Interest.}} The author states that there are no conflicts of interest.

{\bf{Data Availability.}} No data are associated with this article.


\begin{thebibliography}{99}
\bibitem{biler}P. Biler, The Debye system: existence and large time behavior of solutions, Nonlinear Analysis {\bf{23}} 9, (1994), 1189 -1209.
\bibitem{biler2} P. Biler, J. Dolbeault. Long time behavior of solutions to Nernst-Planck and Debye-Hckel drift-diffusion systems, Ann. Henri Poincaré {\bf{1}}, (2000), 461-472.
\bibitem{bothe} D. Bothe, A. Fischer, J. Saal, Global well-posedness and stability of electrokinetic flows, SIAM J. Math. Anal, {\bf 46} 2, (2014), 1263-1316.
\bibitem{choi} Y.S. Choi, and R. Lui, Multi-Dimensional Electrochemistry Model, Arch Rational Mech Anal {\bf{130}} (1995), 315-342.
\bibitem{cf} P. Constantin, C. Foias, Navier-Stokes Equations, The University of Chicago Press, Chicago, 1988.
\bibitem{ci}P. Constantin, M. Ignatova, On the Nernst-Planck-Navier-Stokes system, Arch Rational Mech Anal {\bf{232}}, No. 3, (2018), 1379 -1428.

\bibitem{np3d}P. Constantin, M. Ignatova, F-N Lee, Nernst-Planck-Navier-Stokes systems near equilibrium, Pure and Applied Functional Analysis {\bf{7}} 1, (2022), 175-196.
\bibitem{cil}P. Constantin, M. Ignatova, F.-N. Lee, Nernst–Planck–Navier–Stokes Systems far from Equilibrium. Arch Rational Mech Anal {\bf{240}}, (2021), 1147–1168. https://doi.org/10.1007/s00205-021-01630-x
\bibitem{NPS} P. Constantin, M. Ignatova and F.-N. Lee, Existence and stability of
nonequilibrium steady states of Nernst–Planck-Navier–Stokes systems, Physica D (2022), doi:
https://doi.org/10.1016/j.physd.2022.133536.
\bibitem{davidson}S. M. Davidson, M. Wissling, A. Mani, On the dynamical regimes of pattern-accelerated electroconvection, Scientific Reports {\bf{6}} 22505 (2016) doi:19.1039/srep22505
\bibitem{fischer}A. Fischer, J. Saal, Global weak solutions in three space dimensions for electrokinetic flow processes. J. Evol. Equ. {\bf{17}}, (2017), 309–333. https://doi.org/10.1007/s00028-016-0356-0
\bibitem{gaj} H. Gajewski, K. Groger, On the basic equations for carrier transport in semiconductors, Journal of Mathematical Analysis and Applications, \textbf{113} (1986) 12-35. 

\bibitem{gajewski} H. Gajewski, K. Groger, Reaction-diffusion processes of electrically charged species, Math. Nachr., {\bf{177}} (1996), 109-130.
\bibitem{kang} S. Kang, R. Kawk, Pattern Formation of Three-Dimensional Electroconvection on a Charge Selective Surface, Phys. Rev. Lett \textbf{124} 154502 (2020) https://doi.org/10.1103/PhysRevLett.124.154502
\bibitem{fnl}F.-N. Lee, Global Regularity for Nernst-Planck-Navier-Stokes Systems, 2023 Nonlinearity 36 255
\bibitem{ltd}F.-N. Lee, Long time dynamics of nonequilibrium electroconvection, Trans. Amer. Math. Soc. 377 (2024), 4585-4620, https://doi.org/10.1090/tran/9171
\bibitem{mock} M. Mock, Analysis of Mathematical Models of Semiconductor Devices, Boole Press, Dublin,
1983.
\bibitem{park} J.-H. Park, J. W. Jerome, Qualitative Properties of steady state Poisson--Nernst--Planck Systems: Mathematical Study, SIAM J. Appl. Math. \textbf{57}(3) (1997), 609–630.
\bibitem{liu}J.-G. Liu, J. Wang. Global existence for Nernst-Planck-Navier-Stokes system in $\mathbb{R}^n$, Communications in Mathematical Sciences {\bf{18}} (2020) 1743-1754.
\bibitem{pham} V. S. Pham, Z. Li, K. M. Lim, J. K. White, J. Han, Direct numerical simulation of electroconvective instability and hysteretic current-voltage response of a permselective membrane, Phys. Rev. E {\bf{86}} 046310 (2012) https://doi.org/10.1103/PhysRevE.86.046310
\bibitem{prob}R. Probstein, Physicochemical Hydrodynamics: An Introduction. 2nd ed., Wiley-Interscience, 2003.

\bibitem{rubibook} I. Rubinstein, Electro-Diffusion of Ions, SIAM Studies in Applied Mathematics, SIAM, Philadelphia 1990.
\bibitem{rubinstein}S. M. Rubinstein, G. Manukyan, A. Staicu, I. Rubinstein, B. Zaltzman, R.G.H. Lammertink, F. Mugele, M. Wessling, Direct observation of a nonequilibrium electro-osmotic instability. Phys. Rev. Lett. {\bf{101}} (2008) 236101-236105.

\bibitem{rubizaltz} I. Rubinstein, B. Zaltzman, Electro-osmotically induced convection at a permselective membrane. Phys. Rev. E {\bf{62}} (2000) 2238-2251.
\bibitem {ryham} R. Ryham, Existence, uniqueness, regularity and long-term behavior for dissipative systems modeling electrohydrodynamics. arXiv:0910.4973v1, (2009).
\bibitem{schmuck} M. Schmuck. Analysis of the Navier-Stokes-Nernst-Planck-Poisson system. Math. Models Methods Appl. {\bf{19}}, (2009), 993-1014.
\bibitem{temam} R. Temam, Navier-Stokes Equations: Theory and Numerical Analysis, AMS Chelsea Publishing, 1984.


\bibitem{zaltzrubi}B. Zaltzman, I. Rubinstein, Electro-osmotic slip and electroconvective instability. J. Fluid Mech. {\bf{579}}, (2007), 173-226.
\end{thebibliography}
\end{document}